\let\@@seccntformat\@seccntformat
\renewcommand*{\@seccntformat}[1]{%
  \expandafter\ifx\csname @seccntformat@#1\endcsname\relax
    \expandafter\@@seccntformat
  \else
    \expandafter
      \csname @seccntformat@#1\expandafter\endcsname
  \fi
    {#1}%
}
\newcommand*{\@seccntformat@subsection}[1]{%
  \textbf{\csname the#1\endcsname.}
}
\let\@paragraph\paragraph
\renewcommand*{\paragraph}[1]{%
	\vspace{0.3\baselineskip}%
	\@paragraph{\textit{#1}}%
}
\newtheorem{theorem}[equation]{Theorem}
\newtheorem*{theorem*}{Theorem}
\newtheorem{lemma}[equation]{Lemma}
\newtheorem*{lemma*}{Lemma}
\newtheorem*{proposition*}{Proposition}
\theoremstyle{definition}
\newtheorem*{definition*}{Definition}
\theoremstyle{remark}
\newtheorem*{example*}{Example}
\newtheorem*{note}{Note}
\theoremstyle{plain}
\let\old@caption\caption
\renewcommand*{\caption}[1]{%
	\setcounter{figure}{\value{equation}}%
	\stepcounter{equation}%
	\old@caption{#1}\relax%
}
\newcommand{\decal}[1]{\lbrack #1 \rbrack}
\newcommand{\ltriangle}[4][]%
{\begin{diagram}[#1]%
	{#2} &\rTo& {#3} &\rTo& {#4} &\rTo& {#2 \decal{1}}%
\end{diagram}}
\newcommand{\pr}{\mathit{pr}}
\newcommand{\abs}[1]{\lvert #1 \rvert}
\newcommand{\tensor}{\otimes}
\newcommand{\Hom}{\operatorname{Hom}}
\newcommand{\ZZ}{\mathbb{Z}}
\newcommand{\QQ}{\mathbb{Q}}
\newcommand{\RR}{\mathbb{R}}
\newcommand{\CC}{\mathbb{C}}
\newcommand{\PPn}[1]{\mathbb{P}^{#1}}
\DeclareMathOperator{\id}{id}
\newcommand{\define}[1]{\emph{#1}}
\newcommand{\shf}[1]{\mathscr{#1}}
\newcommand{\OX}{\shf{O}_X}
\newcommand{\OmX}[1]{\Omega_X^{#1}}
\def\overbar#1#2#3{{%
	\setbox0=\hbox{\displaystyle{#1}}%
	\dimen0=\wd0
	\advance\dimen0 by -#2 
	\vbox {\nointerlineskip \moveright #3 \vbox{\hrule height 0.3pt width \dimen0}%
		\nointerlineskip \vskip 1.5pt \box0}%
}}
\newlength{\myarrowsize} 
\newcommand{\Picb}{\overline{\operatorname{Pic}}\vphantom{\operatorname{Pic}}^0}
\newcommand{\Zn}[1]{\ZZ_{#1}}
\newcommand{\Db}{D^b}
\DeclareMathOperator{\Coh}{Coh}
\DeclareMathOperator{\Pic}{Pic}
\newcommand{\Ah}{\Pic^0(A)}
\newcommand{\shP}{\mathcal{P}}
\begin{document}

\title{The fundamental group is not a derived invariant}

\author{Christian Schnell}

\address{Department of Mathematics, Statistics \& Computer Science \\
University of Illinois at Chicago \\
851 South Morgan Street \\
Chicago, IL 60607}

\thanks{The author is partially supported by NSF grant DMS-1100606.}

\email{cschnell@math.uic.edu}

\subjclass[2000]{}

\keywords{Derived category, Derived equivalence, Calabi-Yau threefold,
Fundamental group, (1,8)-polarized abelian surface}


\begin{abstract}
We show that the fundamental group is not invariant under derived equivalence of
smooth projective varieties.
\end{abstract}

\maketitle

\subsection{Acknowledgements}

This short paper is based on my talk at the conference \emph{Derived Categories Tokyo
2011}, and I am very grateful to Yuujiro Kawamata and Yukinobu Toda for inviting me to
the conference. I also thank Lev Borisov for suggesting the example to me in the
first place, and Andrei C\u{a}ld\u{a}raru and Sukhendu Mehrotra for useful discussions.

Some time after giving the talk, I discovered that Anthony Bak \cite{Bak} already has
a preprint on \texttt{arXiv} in which he obtains the same result. In fact, his proof
is more concrete, and therefore more useful for doing calculations with the derived
equivalence. My apology for nevertheless writing this note is that the proof given
here is different and, by relying on the theorem of Bridgeland and Maciocia
\cite{BM}, a little bit shorter than Bak's.

\subsection{Introduction}

For a smooth complex projective variety $X$, we denote by $\Db(X) = \Db \Coh(X)$ the
bounded derived category of coherent sheaves on $X$. Recall that two smooth
projective varieties $X$ are $Y$ are said to be \define{derived equivalent} if
$\Db(X) \simeq \Db(Y)$ as $\CC$-linear triangulated categories. We sometimes write $X
\sim Y$ to indicate that $X$ and $Y$ are derived equivalent.

From the work of Bondal, Orlov, C\u{a}ld\u{a}raru, Kawamata, and others, it is known
that many of the basic invariants of algebraic varieties are preserved under derived
equivalence.  These include the dimension, the Kodaira dimension, the canonical ring,
and the order of the canonical class. It has also been conjectured that the Hodge
structure on the cohomology with rational coefficients is a derived invariant, in the
sense that if $X \sim Y$, then one should have $H^k(X, \QQ) \simeq H^k(Y, \QQ)$ as
rational Hodge structures, for every $k \in \ZZ$. In particular, it is expected that
the Hodge numbers
\[
	h^{p,q}(X) = \dim_{\CC} H^{p,q}(X) = \dim_{\CC} H^q \bigl( X, \OmX{p} \bigr)
\]
are invariant under derived equivalences.

In joint work with Mihnea Popa \cite{d-equivalence}, we showed that if $X$ and $Y$ are
derived equivalent, then $H^1(X, \QQ) \simeq H^1(Y, \QQ)$ as rational Hodge
structures; in geometric terms, this means that the two Picard varieties $\Pic^0(X)$ and
$\Pic^0(Y)$ are isogenous abelian varieties. Ignoring the choice of basepoint,
\[
	H^1(X, \QQ) \simeq \Hom_{\ZZ} \bigl( \pi_1(X), \QQ \bigr),
\]
and so our result naturally leads to the question whether the fundamental group
$\pi_1(X)$ is itself a derived invariant. The point of this paper is to show that
this is not the case. 

More precisely, I will describe an example of a simply connected Calabi-Yau threefold
$X$, with a nontrivial free action by a finite group $G$, such that the quotient
$X/G$ is derived equivalent to $X$. Since $\pi_1(X/G) = G$, while $\pi_1(X) = \{1\}$,
this means that neither the fundamental group nor the property of being simply
connected are preserved under derived equivalence.

\subsection{A related problem}

Before continuing, I should point out that this result is connected to a larger
question raised by Daniel Huybrechts and Marc Nieper-Wisskirchen, about
derived equivalences of varieties with trivial first Chern class. To set up some
notation, suppose that $X$ is a smooth projective variety whose first Chern class
$c_1(X)$ is zero as an element of  $H^2(X, \RR)$. By Yau's theorem, $X$ admits a
Ricci-flat K\"ahler metric; by studying the holonomy of this metric, Bogomolov and
Beauville \cite{Beauville} have shown that a finite \'etale cover $X' \to X$
decomposes into a finite product
\[
	X' \simeq A \times \prod_i Y_i \times \prod_j Z_j
\]
with $A$ an abelian variety, each $Y_i$ a simply connected Calabi-Yau manifold of
dimension at least three, and each $Z_j$ a holomorphic symplectic manifold. 
Huybrechts and Nieper-Wisskirchen ask whether the structure of this decomposition is
invariant under derived equivalences \cite{HNW}*{Question~0.2}.

In the special case of Calabi-Yau threefolds, the question becomes the following:
Suppose that $X$ is a simply connected Calabi-Yau threefold, and that $Y \sim X$.
Because of the example in this paper, we cannot expect $Y$ to be simply
connected. On the other hand, the first Chern class of $Y$ is also trivial, and so a
finite \'etale cover of $Y$ must be of one of the following three types:
\begin{enumerate}
\item A simply connected Calabi-Yau threefold.
\item An abelian threefold.
\item The product of an elliptic curve and a K3-surface.
\end{enumerate}
Note that there are examples of finite quotients of abelian threefolds (or products
of an elliptic curve and a K3-surface) with trivial canonical bundle and zero first
Betti number; a partial classification may be found in \cite{OS}.  Nevertheless, it
seems likely that such varieties cannot be derived equivalent to a simply connected
Calabi-Yau threefold. A proof of this would be a useful step towards answering the
general question of Huybrechts and Nieper-Wisskirchen.

\subsection{The example}

Let us now turn to the description of the example, which was suggested to me by Lev
Borisov. The Calabi-Yau threefold in question is one of a class of such varieties
constructed by Mark Gross and Sorin Popescu \cite{GP}, and has to do with
(1,8)-polarized abelian surfaces.

We shall begin by recalling their construction. Let $(A, L)$ be a 
(1,8)-polarized abelian surface. In other words, suppose that $A$ is an abelian
surface, and that $L$ is an ample line bundle on $A$ such that the isogeny
\[
	\varphi_L \colon A \to \Ah, \quad 
		a \mapsto t_a^{\ast}(L) \tensor L^{-1},
\]
has kernel isomorphic to $\Zn{8} \times \Zn{8}$. 
One can show that $L$ is then
automatically very ample with $h^0(L) = 8$; by the Riemann-Roch theorem, it follows
that we have $8 = \chi(L) = L^2/2$, which gives $L^2 = 16$. The line bundle therefore
embeds $A$ as a surface of degree $16$ into $\PPn{7}$, and it
is possible to choose the coordinates on the projective space in such a way that the action of
$G = \Zn{8} \times \Zn{8}$ on $\PPn{7}$ is given by the formulas
\begin{align*}
	\sigma (x_0 \colon x_1 \colon \dotsb \colon x_6 \colon x_7) 
		&= (x_1 \colon x_2 \colon \dotsb \colon x_7 \colon x_0)  \\
	\tau (x_0 \colon x_1 \colon \dotsb \colon x_6 \colon x_7)
		&= (x_0 \colon \zeta x_1 \colon \dotsb \colon \zeta^6 x_6 \colon \zeta^7 x_7).
\end{align*}
Here $\sigma$ and $\tau$ denote the two natural generators of the group $G$, and
$\zeta$ is a primitive eighth root of unity.

The idea of Gross and Popescu is to look at quadrics in $\PPn{7}$ that contain the
image of $A$. Provided that the pair $(A, L)$ is general in moduli, they show that
the space of such quadrics has dimension four, and that it is generated by the four
quadrics $f$, $\sigma f$, $\sigma^2 f$, and $\sigma^3 f$, where
\[
	f = y_1 y_3 (x_0^2 + x_4^2) - y_2^2 (x_1 x_7 + x_3 x_5) 
		+ (y_1^2 + y_3^2) x_2 x_6,
\]
and $y \in \PPn{2}$ is a general point. The intersection 
\[
	V_{8,y} = Z(f) \cap Z(\sigma f) \cap Z(\sigma^2) f \cap Z(\sigma^3 f)
\]
is then a threefold on which the group $G$ acts freely. Conversely, if we assume that
$y \in \PPn{2}$ is chosen sufficiently general, $V_{8,y}$ will be a complete
intersection of dimension three which is smooth except for $64$ ordinary double points,
the $G$-orbit of the point $(0 \colon y_1 \colon y_2 \colon y_3 \colon 0 \colon -y_3 \colon
-y_2 \colon -y_1)$. There is always a one-dimensional family of $(1,8)$-polarized
abelian surfaces contained in $V_{8,y}$, and every member of the family passes
through the $64$ distinguished points.

Gross and Popescu discovered that $V_{8,y}$ admits two small resolutions $V_{8,y}^1$
and $V_{8,y}^2$, both Calabi-Yau threefolds. The original abelian surface $A$ is a
Weil divisor on $V_{8,y}$ that is not Cartier; blowing up $A$ produces a small
resolution $V_{8,y}^2 \to V_{8,y}$. Using the Lefschetz theorem and adjunction, one
can easily show that $V_{8,y}^2$ is a simply connected Calabi-Yau threefold. The $64$
exceptional curves can be flopped simultaneously to produce another simply connected
Calabi-Yau threefold $V_{8,y}^1$, and Gross and Popescu compute that 
\[
	h^{1,1}(V_{8,y}^1) = h^{1,2}(V_{8,y}^1) = 2.
\]
In fact, the Picard group of $V_{8,y}^1$ is generated (modulo torsion) by the classes
of two divisors: the strict transform $A$ of the original abelian surface, and the
preimage $H$ of a hyperplane section. They satisfy 
\[
	H^3 = 16, \quad H^2 \cdot A = 16, \quad H \cdot A^2 = 0, \quad A^3 = 0.
\]

The reader can find a concise summary of all the properties of the Calabi-Yau
threefolds $V_{8,y}^1$ and $V_{8,y}^2$ in \cite{GPav}. We shall only list those that
are needed below.
\begin{enumerate}
\item The linear system $\abs{A}$ is one-dimensional, and the resulting morphism
\[
	p \colon V_{8,y}^1 \to \PPn{1}
\]
is an abelian surface fibration with exactly $64$ sections. The images of these
sections are the $64$ exceptional curves of the flop. 
\item Every smooth fiber of $p$ is a (1,8)-polarized abelian surface, with
polarization induced by the restriction of the line bundle $\OX(H)$. The intersection
with the images of the $64$ sections is precisely the kernel of the polarization.
\item There are exactly eight singular fibers, each of them an elliptic
translation scroll. Such a scroll is obtained from an elliptic normal curve
$E$ in $\PPn{7}$ by fixing a point $e \in E$, and letting $T_e(E)$ be the union of
all lines through $x$ and $x+e$, for $x \in E$. It is not hard to see that $T_e(E)$
is singular precisely along the elliptic curve $E$, and that $E \times \PPn{1}$ is a
resolution of singularities.
\item The group $G$ acts freely on $V_{8,y}^1$, and the $64$ sections form a single
$G$-orbit.  On each smooth fiber, the action of $G$ is the natural action by the
kernel of the polarization; on each singular fiber, the action of $G$ is the natural
action by the group of $8$-torsion points of the elliptic curve.
\end{enumerate}

Now let $X = V_{8,y}^1$. Since $G = \Zn{8} \times \Zn{8}$ acts freely on $X$, the
quotient $X/G$ is again a smooth projective variety with trivial canonical bundle.
The following theorem is the main result of this paper.

\begin{theorem} \label{thm:main}
The two varieties $X$ and $X/G$ are derived equivalent.
\end{theorem}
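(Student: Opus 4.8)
The plan is to produce a derived equivalence between $X = V_{8,y}^1$ and $X/G$ by exhibiting both as moduli spaces of suitable objects in a single derived category, using the fibration structure $p \colon X \to \PPn{1}$ and the relative Fourier--Mukai transform for abelian-surface fibrations. The key observation is that the smooth fibers of $p$ are $(1,8)$-polarized abelian surfaces, and the group $G = \Zn{8} \times \Zn{8}$ acts fiberwise as the kernel of the polarization. On a single $(1,8)$-polarized abelian surface $(A,L)$, the quotient $A/G$ is the dual abelian surface $\widehat{A}$, and the quotient map is, up to isogeny, the polarization isogeny $\varphi_L$; moreover a classical Fourier--Mukai argument shows that $A$ and $\widehat{A}$ are derived equivalent via the Poincaré bundle. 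So I expect the desired equivalence to come from globalizing this fiberwise duality over the base $\PPn{1}$.

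Let me lay out the steps. First I would invoke the theorem of Bridgeland and Maciocia \cite{BM}, which identifies, for an abelian-surface fibration, the relevant fiberwise dual (or a moduli space of fiberwise stable sheaves) as again a smooth projective variety derived equivalent to the total space. Concretely, I would consider the relative dual fibration $\widehat{p} \colon \widehat{X} \to \PPn{1}$, whose smooth fibers are the dual abelian surfaces, and show that $\widehat{X} \sim X$ via a relative Poincaré-type kernel. The second step is to identify $\widehat{X}$ with $X/G$: since the fiberwise quotient by $G = \ker\varphi_L$ recovers the dual abelian surface, and since the $G$-action on $X$ is fiberwise the kernel of the polarization (property (2) and (4) of the list), I would argue that $X/G$ is isomorphic — fiber by fiber, hence globally after checking compatibility over the base — to the relative dual $\widehat{X}$. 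The third step is to take care of the eight singular fibers, which are elliptic translation scrolls $T_e(E)$ on which $G$ acts through the $8$-torsion of the elliptic curve $E$; here I would check that the fiberwise construction extends across these fibers, using that $E \times \PPn{1}$ resolves the scroll and that quotienting by the $8$-torsion is compatible with the duality on nearby smooth fibers.

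The main obstacle I anticipate is precisely the behavior over the singular fibers and the passage from a fiberwise statement to a genuine global derived equivalence. Over the smooth locus of the base the relative Fourier--Mukai transform is well-understood, but the eight singular fibers force me to verify that the relative kernel extends to a sheaf (or complex) on $X \times_{\PPn{1}} \widehat{X}$ that still induces an equivalence of the \emph{global} derived categories $\Db(X)$ and $\Db(\widehat{X}) \simeq \Db(X/G)$, rather than merely an equivalence over the open base. This is where the strength of \cite{BM} is essential: it provides conditions under which such a relative transform is a global equivalence despite the presence of degenerate fibers, and checking that the Gross--Popescu geometry (in particular the structure of the scrolls and the freeness of the $G$-action recorded in property (4)) satisfies these conditions is the crux of the argument. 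Once the equivalence $\Db(X) \simeq \Db(\widehat{X})$ is established and the identification $\widehat{X} \cong X/G$ is in hand, the theorem follows.
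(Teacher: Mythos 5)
Your overall strategy --- use Bridgeland--Maciocia to produce a derived equivalence between $X$ and a relative moduli space $\widehat{X} \to \PPn{1}$ of fiberwise sheaves, then identify $\widehat{X}$ with $X/G$ --- is exactly the strategy of the paper, which takes $\widehat{X}$ to be the compactified relative Picard scheme $M = \Picb(X/\PPn{1})$ of Altman--Kleiman. Two points in your first step need more care than you give them: \cite{BM}*{Theorem~1.2} requires $M$ to be a \emph{fine} moduli space, i.e.\ that a universal sheaf exists on $M \times X$, and this is not automatic (it is the usual obstruction to normalizing the Poincar\'e bundle in a family); in the paper it is supplied by choosing one of the $64$ sections $s_0$ of $p$, whose image necessarily lies in the smooth locus of $p$, and invoking \cite{AK}*{Theorem~3.4}. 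Projectivity of $M$ also needs an argument (the paper cites Sawon). These are fixable, and your identification of the smooth fibers of $\widehat{X}$ with those of $X/G$ via $\varphi_L$ and the Poincar\'e bundle is correct and matches Lemma~\ref{lem:step2}.

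The genuine gap is your third step. You propose to extend the fiberwise identification across the eight singular fibers by analyzing the duality theory of the elliptic translation scrolls directly, via the resolution $E \times \PPn{1}$. That would require computing the compactified Picard scheme of a translation scroll and matching it with the $G$-quotient of the scroll --- a substantial task for which you give no concrete plan, and which the paper deliberately avoids. The paper's key idea at this point is different: the fiberwise construction gives an open immersion $f \colon U/G \to M$ over the complement $B$ of the eight singular values, hence a \emph{birational} map between the two smooth projective Calabi--Yau threefolds $X/G$ and $M$. By Koll\'ar \cite{Kollar}*{Theorem~4.9} such a map is either an isomorphism or a composition of flops, and its exceptional locus is a union of rational curves contained in the eight singular fibers. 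But the only rational curves on an elliptic translation scroll are the lines of the ruling, which sweep out the entire fiber --- a divisor in $X/G$ --- and therefore cannot be flopped. Hence $f$ extends to an isomorphism $X/G \simeq M$, with no need to understand $M$ over the singular points at all. Without this (or some equivalent) argument, your proof does not close.
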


Since $X$ is simply connected, while the quotient $X/G$ has fundamental group
isomorphic to $G$, it follows that the fundamental group is not invariant under
derived equivalences.

\begin{note}
For reasons coming from physics, Mark Gross and Simone Pavanelli \cite{GPav}
conjecture that the quotient of $X$ by one of the two $\Zn{8}$-factors of $G$ should
be the mirror manifold of $X$, and that $X/G$ should be the mirror of the mirror.
Homological mirror symmetry would therefore predict that $\Db(X) \simeq \Db(X/G)$.
\end{note}

\subsection{Proof of the theorem}

We now describe one possible proof of Theorem~\ref{thm:main}. An earlier proof, more
concrete but also slightly longer, may be found in the preprint by Anthony Bak
\cite{Bak}.

To explain the basic idea, let us consider one of the smooth fibers $A$ of the
morphism $p \colon X \to \PPn{1}$; it is a $(1,8)$-polarized abelian surface, with
polarization $L$ induced by the restriction of $H$. The group $G$ acts on $A$, and
the $64$ points in the kernel of the isogeny
\[
	\varphi_L \colon A \to \Ah
\]
form a single $G$-orbit. Consequently, $\varphi_L$ gives rise to an isomorphism $A/G
\simeq \Ah$. Moreover, once we choose one of the $64$ points in the kernel as the
unit element on $A$, there is a well-defined Poincar\'e line bundle
on $A \times \Ah$, and the associated Fourier-Mukai transform induces an equivalence
$\Db(A) \simeq \Db(\Ah)$. In our proof, we shall generalize these observations by (1)
using the theorem of Bridgeland and Maciocia to prove that $X$ is derived equivalent
to the compactified relative Picard scheme $M$, and (2) showing that $M$ is actually
isomorphic to $X/G$.

We begin by introducing the space $M$. Let $s_0$ be one of the $64$ sections of $p
\colon X \to \PPn{1}$. The general fiber of $p$ is an abelian surface, and the eight
singular fibers are elliptic translation scrolls, and therefore reduced and
irreducible. Thus it makes sense to consider the compactified relative Picard scheme
\[
	M = \Picb(X / \PPn{1})
\]
defined by Altman and Kleiman \cite{AK}. For any smooth fiber $A$ of $X \to \PPn{1}$,
the corresponding fiber of $M \to \PPn{1}$ is also smooth and isomorphic to $\Ah$.
In general, the compactified relative Picard scheme may be singular, and may fail to
be a fine moduli space because the ambiguity in normalizing the Poincar\'e bundle
can prevent the existence of a universal sheaf. But in our case, everything works
out nicely.

\begin{lemma} \label{lem:step1}
$M$ is a nonsingular projective Calabi-Yau threefold. Moreover, a universal sheaf
exists on $M \times X$, and induces an equivalence $\Db(M) \simeq \Db(X)$.
\end{lemma}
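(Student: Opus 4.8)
The plan is to verify the hypotheses of the theorem of Bridgeland and Maciocia \cite{BM} for the fibration $p \colon X \to \PPn{1}$, and then to read off all three assertions from their conclusion. Their result applies to a projective morphism from a smooth variety whose fibers are integral and carry a trivial dualizing sheaf, provided one has a section with which to rigidify the moduli problem; under these conditions the relative compactified Picard scheme is smooth, carries a universal (Poincar\'e) sheaf, and is derived equivalent to the total space by means of the associated relative Fourier--Mukai transform. So the first step is to check these three inputs. Integrality of the fibers holds by construction: the general fiber is a $(1,8)$-polarized abelian surface, and the eight singular fibers are elliptic translation scrolls, which are reduced and irreducible. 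The relative dualizing sheaf is trivial along the fibers, since $\omega_{X/\PPn{1}} \simeq \omega_X \tensor p^{\ast}\omega_{\PPn{1}}^{-1}$ and $\omega_X \simeq \OX$, so that the dualizing sheaf of each fiber is trivial by adjunction. Finally, the distinguished section $s_0$ supplies the rigidification: restricting along the image of $s_0$ normalizes the Poincar\'e bundle and removes exactly the ambiguity that would otherwise obstruct the existence of a universal sheaf.

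Granting the hypotheses, the theorem of Bridgeland and Maciocia yields at once that $M = \Picb(X/\PPn{1})$ is a nonsingular projective variety, that a universal sheaf $\shP$ exists on the fiber product $M \times_{\PPn{1}} X$ and hence, by pushing forward along the closed immersion $M \times_{\PPn{1}} X \into M \times X$, on $M \times X$, and that the integral transform with kernel $\shP$ is an equivalence $\Db(M) \simeq \Db(X)$. It remains to identify $M$ as a Calabi-Yau threefold. Since the equivalence is a Fourier--Mukai transform, it preserves the canonical bundle, so $\omega_M \simeq \shO_M$ follows from $\omega_X \simeq \OX$; alternatively this is the derived invariance of the order of the canonical class recalled in the introduction. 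To see that the remaining Hodge numbers vanish, I would invoke the result of Popa and Schnell \cite{d-equivalence}, which gives $H^1(M, \QQ) \simeq H^1(X, \QQ) = 0$ and hence $h^{0,1}(M) = 0$; Serre duality on the threefold $M$ together with $\omega_M \simeq \shO_M$ then forces $h^{0,2}(M) = h^1(M, \omega_M) = h^{0,1}(M) = 0$ as well. Thus $M$ is a genuine Calabi-Yau threefold.

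The main obstacle is the verification that $M$ is smooth and that it is a fine moduli space, not the derived equivalence itself, which is formal once a universal sheaf is in hand. A compactified relative Picard scheme is typically singular, and fineness typically fails, precisely over the locus of singular fibers; here the delicate point is to control $M$ over the eight elliptic translation scrolls. This is exactly the situation that the theorem of Bridgeland and Maciocia is designed to handle: the integrality of these fibers, the triviality of their dualizing sheaves, and the presence of the section $s_0$ are the three features that together guarantee smoothness of the total space and the existence of the Poincar\'e sheaf. Consequently the whole proof reduces to checking that $p \colon X \to \PPn{1}$ meets their standing assumptions, after which the statement is immediate.
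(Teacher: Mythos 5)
Your overall strategy --- reduce everything to the theorem of Bridgeland and Maciocia --- is the same as the paper's, but you have misstated what that theorem takes as input and what it returns as output, and this leaves a genuine gap. \cite{BM}*{Theorem~1.2} is a statement about an irreducible component $Y$ of the moduli space of (Simpson-)stable sheaves on the fibers of an abelian surface or K3 fibration of a smooth projective threefold; its \emph{hypotheses} are that $Y$ is fine and complete (and of the right dimension), and its \emph{conclusions} are that $Y$ is a smooth projective Calabi-Yau threefold derived equivalent to $X$. You have treated completeness, fineness, and smoothness all as conclusions, and replaced the actual hypotheses by a list (integral fibers, trivial relative dualizing sheaf, a section) that is not what Bridgeland and Maciocia assume. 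Two things therefore remain unproved in your write-up. First, you never show that $M = \Picb(X/\PPn{1})$ is projective, nor that it is an irreducible component of the Simpson moduli space of stable rank-one torsion-free sheaves at all --- which is needed even to place yourself in the setting of \cite{BM}; the paper gets both points at once from \cite{Sawon}*{Lemma~8}. Second, the existence of a universal sheaf (fineness) must be established \emph{before}, not after, invoking \cite{BM}. Your idea of using the section $s_0$ to normalize the Poincar\'e sheaf is the right one and matches the paper, but it needs to be pinned down by \cite{AK}*{Theorem~3.4}, together with the observation that the image of $s_0$ automatically lies in the smooth locus of $p$ because $X$ is nonsingular.

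Your supplementary argument that $M$ is Calabi-Yau --- triviality of $\omega_M$ from the Fourier-Mukai equivalence, and $h^{0,1}(M) = h^{0,2}(M) = 0$ from \cite{d-equivalence} plus Serre duality --- is correct and not circular once the equivalence is in hand, but it is also unnecessary, since \cite{BM} already delivers that $M$ is a Calabi-Yau threefold.
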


\begin{proof}
To begin with, Sawon \cite{Sawon}*{Lemma~8} has shown that $M$ is projective,
because it is an irreducible component of Simpson's moduli space of stable rank-one
torsion-free sheaves on $X$. Next, the existence of a section $s_0$ implies that
there is a universal sheaf on $M \times_{\PPn{1}} X \subseteq M \times X$. Indeed,
because $X$ is nonsingular, the image of $s_0$ has to lie inside the smooth locus of
$p$, and so we can apply \cite{AK}*{Theorem~3.4} to obtain the existence of a
universal sheaf.  In particular, $M$ is a fine moduli space.  

The theorem of Bridgeland and Maciocia \cite{BM}*{Theorem~1.2} now allows us to
conclude that $M$ is also a nonsingular Calabi-Yau threefold, and that the universal
sheaf induces an equivalence between the derived categories of $M$ and $X$.
\end{proof}

The remainder of the proof consists in showing that $M$ is, in fact, isomorphic to
the quotient $X / G$. Our argument is based on the observation, explained above, that
the smooth fibers of $X/G$ and $M$ are canonically isomorphic. The main issue is to
extend this isomorphism to the singular fibers.

We begin by constructing a rational map from $X/G$ to $M$, using the universal
property of the fine moduli space $M$. The idea is the following: Let $(A, L)$
be a polarized abelian variety, and let $\shP_A$ denote the normalized Poincar\'e
bundle on $A \times \Ah$. The pullback of $\shP_A$ under the morphism
\[
	\id \times \varphi_L \colon A \times A \to A \times \Ah
\]
satisfies
\begin{equation} \label{eq:Poincare}
	(\id \times \varphi_L)^{\ast} \shP_A \simeq
		m^{\ast} L \tensor \pr_1^{\ast} L^{-1} \tensor \pr_2^{\ast} L^{-1},
\end{equation}
where $m \colon A \times A \to A$ is the multiplication on $A$. This allows us to
describe the morphism $\varphi_L$ to the moduli space $\Ah$ in terms of a line bundle
on $A \times A$.

To extend this construction to $X$, let $B \subseteq \PPn{1}$ be the complement of
the eight singular values of $p$, and set $U = p^{-1}(B)$. Then $p \colon U \to B$ is
smooth, and our choice of section $s_0$ makes it into a group scheme over $B$. We
also denote the multiplication morphism by $m \colon U \times_B U \to U$.

\begin{lemma} \label{lem:step2}
There is a morphism $f \colon U/G \to M$, commuting with the projections to
$\PPn{1}$, whose restriction to any smooth fiber agrees with the natural isomorphism
$A/G \to \Ah$ induced by the $(1,8)$-polarization $\OX(H) \vert_A$. 
\end{lemma}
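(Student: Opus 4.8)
The plan is to obtain $f$ from the universal property of the fine moduli space $M$ (Lemma~\ref{lem:step1}), by writing down an explicit family of degree-zero line bundles on the smooth fibers of $p$ and checking that it descends to $U/G$. I first fix the origin of the group scheme $U \to B$ to be the section $s_0$, so that $U \to B$ becomes an abelian scheme. By property~(2) the union $K$ of the $64$ sections is precisely the kernel of the relative polarization $\varphi_{\shf L}$ attached to $\shf L = \OX(H)\restr{U}$, and by property~(4) the action of $G$ on $U$ is translation by the subgroup scheme $K \simeq G$.

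Next I consider on $U \times_B U$ the line bundle
\[
	\shf N = m^{\ast}\shf L \tensor \pr_1^{\ast}\shf L^{-1} \tensor \pr_2^{\ast}\shf L^{-1},
\]
viewed as a family over the first factor through $\pr_1 \colon U \times_B U \to U$. Its restriction to the fiber $\{u\} \times A$, with $A = p^{-1}(p(u))$, equals $t_u^{\ast}L \tensor L^{-1} = \varphi_L(u)$, a degree-zero line bundle and in particular a stable rank-one torsion-free sheaf; since $\shf N$ is a line bundle and $\pr_1$ is smooth, the family is automatically flat over $U$. By~\eqref{eq:Poincare}, applied fibrewise, $\shf N$ is the pullback of the relative Poincar\'e bundle along $\id \times \varphi_{\shf L}$. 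The universal property of $M$ therefore produces a classifying morphism $c \colon U \to M$ over $\PPn{1}$ whose restriction to each smooth fiber is the polarization isogeny $\varphi_L \colon A \to \Ah$.

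It remains to descend $c$ along $U \to U/G$, and this is the main technical point. Let $\mu_g \colon U \to U$ denote translation by the section $s_g = g \cdot s_0 \in K$. Because $\varphi_L$ is a homomorphism killing $s_g$ on every fiber, the line bundle $t_{s_g}^{\ast}\shf L \tensor \shf L^{-1}$ is trivial on each fiber and hence pulled back from $B$; a short computation with the formula for $\shf N$ then gives
\[
	(\mu_g \times \id)^{\ast}\shf N \simeq \shf N \tensor \pr_1^{\ast}\shf R_g
\]
for some line bundle $\shf R_g$ on $U$. Since $M$ is a fine moduli space, twisting a family by a line bundle pulled back from the parameter space leaves the classifying morphism unchanged, so $c \circ \mu_g = c$ for all $g \in G$. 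Thus $c$ is $G$-invariant and factors through a morphism $f \colon U/G \to M$ commuting with the projections to $\PPn{1}$; by construction its restriction to a smooth fiber is the descent of $\varphi_L$, which is exactly the natural isomorphism $A/G \to \Ah$. The one step requiring care is the seesaw identity for $(\mu_g \times \id)^{\ast}\shf N$, whereas verifying that $\shf N$ is an admissible family for the moduli functor is routine precisely because $\shf N$ is a line bundle.
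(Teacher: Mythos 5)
Your proof is correct and follows essentially the same route as the paper: both arguments classify the line bundle $m^{\ast}\OX(H) \tensor \pr_1^{\ast}\OX(-H) \tensor \pr_2^{\ast}\OX(-H)$ on $U \times_B U$ via \eqref{eq:Poincare} and the universal property of the fine moduli space $M$. The only (harmless) difference is that you first build $c \colon U \to M$ and then descend the \emph{morphism} through a seesaw computation showing $G$-invariance up to a twist pulled back from the parameter space, whereas the paper observes directly that the line bundle is invariant under the $G$-action on one factor and descends it to $U \times_B (U/G)$ before invoking the universal property once.
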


\begin{proof}
We begin by constructing a morphism $U \to M$ whose restriction to every smooth fiber
$A$ agrees with the natural morphism $\varphi_L \colon A \to \Ah$ induced by the
polarization $L = \OX(H) \vert_A$. By the universal property of $M$, it suffices to
construct a line bundle on the product $U \times_B U$ whose restriction to $A \times
A$ is isomorphic to the pullback of the Poincar\'e bundle $\shP_A$. But clearly
\[
	m^{\ast} \OX(H) \tensor \pr_1^{\ast} \OX(-H) \tensor \pr_2^{\ast} \OX(-H)
\]
is such a line bundle, by virtue of \eqref{eq:Poincare}. The same identity shows that
this line bundle is invariant under the action of $G$ on the second factor of $U
\times_B U$, and therefore descends to a line bundle on $U \times_B (U/G)$ whose
restriction to $A \times (A/G)$ equals the pullback of $\shP_A$. The universal
property of $M$ now gives us the desired morphism $f \colon U/G \to M$.
\end{proof}

In particular, $f$ is an isomorphism onto its image, and so $X/G$ and $M$ are 
birational. This is already sufficient to conclude that $\pi_1(M) \simeq G$; but in
fact, we can use the geometry of both varieties to show that they must be isomorphic.

\begin{lemma} \label{lem:step3}
$f \colon U/G \to M$ extends to an isomorphism $X/G \simeq M$.
\end{lemma}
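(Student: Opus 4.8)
The plan is to first extend $f$ to a genuine morphism $X/G \to M$, and then to show that this morphism is an isomorphism. Both $X/G$ and $M$ are smooth projective Calabi-Yau threefolds equipped with fibrations over $\PPn{1}$ (the map $p$ descends to $X/G$, while $M$ carries the structure map of the relative Picard scheme), and by Lemma~\ref{lem:step2} the rational map $f$ is already an isomorphism over the open set $B \subseteq \PPn{1}$. In particular $f$ is birational, and the entire content of the statement is concentrated over the eight singular values of $p$. I would organize the argument so that the analysis over $B$ is reused verbatim and only the eight singular fibres require new input.

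Since $X/G$ is smooth and $M$ is projective, $f$ is defined away from a closed subset of codimension at least two which, because $f$ is regular on $U/G$, must be contained in the singular fibres and is therefore a finite union of curves. To see that $f$ extends across the generic point of each singular fibre I would invoke the fact that a birational map between smooth minimal models is an isomorphism in codimension one: as $K_{X/G}$ and $K_M$ are both trivial, $f$ extends to an isomorphism between open subsets of $X/G$ and $M$ whose complements have codimension $\ge 2$. Thus each singular fibre of $X/G$ is carried birationally onto a singular fibre of $M$, and the only possible indeterminacy consists of flopping curves lying inside the eight singular fibres.

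Eliminating these curves is the heart of the matter, and I expect it to be the main obstacle. The singular fibre of $p$ is an elliptic translation scroll $T_e(E)$ on which $G$ acts through the $8$-torsion $E[8]$; since translation by $E[8]$ commutes with the scroll construction, the corresponding fibre of $X/G$ is again a translation scroll, namely $T_{\bar e}(E/E[8])$. On the other side, the corresponding fibre of $M$ is the compactified Jacobian $\Picb(T_e(E))$. I would compute this compactified Jacobian explicitly, using the resolution $E \times \PPn{1} \to T_e(E)$ together with the description of rank-one torsion-free sheaves on the scroll, and identify it with the translation scroll $T_{\bar e}(E/E[8])$, in exact analogy with the self-duality $A/G \simeq \Ah$ of the smooth fibres. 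Matching this identification with the isomorphism already produced over $B$ shows that the fibrewise map extends to an isomorphism of the singular fibres, so that no flop is needed and $f$ restricts to an isomorphism on every fibre.

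Finally, the extended morphism $f \colon X/G \to M$ is proper (both varieties are projective), birational, and bijective, hence quasi-finite; a proper quasi-finite morphism is finite, and a finite birational morphism onto the smooth---in particular normal---variety $M$ is an isomorphism by Zariski's main theorem. This yields the desired isomorphism $X/G \simeq M$.
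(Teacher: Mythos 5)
Your overall framing is the right one --- view $f$ as a birational map between two smooth projective Calabi--Yau threefolds that is already an isomorphism over $B$, observe that it is an isomorphism in codimension one, and reduce everything to ruling out indeterminacy along curves in the eight singular fibres. Up to that point you are walking in step with the paper. But the step you yourself identify as ``the heart of the matter'' --- computing the compactified Jacobian $\Picb$ of an elliptic translation scroll $T_e(E)$, identifying it with the quotient scroll, and then matching that identification against the isomorphism already constructed over $B$ --- is only announced, not carried out. That computation is the entire content of the lemma under your strategy: classifying rank-one torsion-free sheaves on a non-normal surface whose singular locus is the elliptic curve $E$ is a genuinely delicate piece of work, and without it the proof is incomplete. (Your final Zariski's-main-theorem paragraph also silently relies on bijectivity, which is exactly what the missing fibrewise analysis would have to supply.)

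The paper avoids this computation entirely, and you should compare your plan against its shortcut. By Koll\'ar's theorem, a birational map between smooth minimal threefolds is either an isomorphism or a composition of flops, and the exceptional locus of a flop is a union of rational curves contained, in our case, in the eight singular fibres. The key geometric observation is that the only rational curves on an elliptic translation scroll are the lines of the ruling, and these sweep out the whole scroll --- a \emph{divisor} in $X/G$. Since the exceptional locus of a flop has codimension at least two, none of these curves can be flopped, so $f$ must already be an isomorphism. This two-line argument replaces your proposed identification of $\Picb(T_e(E))$ with $T_{\bar e}(E/E[8])$; if you want to salvage your route, you must either actually perform that identification or substitute the codimension argument for it.
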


\begin{proof}
We consider $f$ as a birational map from $X/G$ to $M$. Since both are smooth
Calabi-Yau threefolds, any birational map between $X/G$ and $M$ is either an
isomorphism, or a composition of flops \cite{Kollar}*{Theorem~4.9}. The second
possibility is easily ruled out: Indeed, since $f$ is an isomorphism over $U$, the
exceptional locus is contained in the eight singular fibers; moreover, it must be a
union of rational curves \cite{Kollar}*{Proposition~4.6}. Now each singular
fiber is an elliptic translation scroll, which means that the only rational curves on
it are the one-dimensional family of lines on the scroll. Since these lines cover the
singular fiber, which is a divisor in $X/G$, they cannot be flopped. Consequently,
the birational map $f$ must extend to an isomorphism $X/G \simeq M$.
\end{proof}

\begin{note}
Following the argument \cite{GPav}*{Lemma~1.2}, one can show more generally that
$X/G$ does not admit any flops relative to $\PPn{1}$ at all.
\end{note}

\section*{References}

\begin{biblist}
\bib{AK}{article}{
   author={Altman, Allen B.},
   author={Kleiman, Steven L.},
   title={Compactifying the Picard scheme. II},
   journal={Amer. J. Math.},
   volume={101},
   date={1979},
   number={1},
   pages={10--41},
}
\bib{Bak}{article}{
	author={Bak, Anthony},
	title={The Spectral Construction for a (1,8)-Polarized Family of Abelian
		Varieties},
	date={2009},
	eprint={arXiv:0903.5488},
}
\bib{Beauville}{article}{
	author={Beauville, Arnaud},
	title={Vari\'et\'es K\"ahleriennes dont la premi\`ere classe de Chern est nulle},
	journal={J. Differential Geometry},
	volume={18},
	year={1983},
	pages={755--782},
}
\bib{BM}{article}{
   author={Bridgeland, Tom},
   author={Maciocia, Antony},
   title={Fourier-Mukai transforms for $K3$ and elliptic fibrations},
   journal={J. Algebraic Geom.},
   volume={11},
   date={2002},
   number={4},
   pages={629--657},
}
\bib{GPav}{article}{
   author={Gross, Mark},
   author={Pavanelli, Simone},
   title={A Calabi-Yau threefold with Brauer group $(\Bbb Z/8\Bbb Z)^2$},
   journal={Proc. Amer. Math. Soc.},
   volume={136},
   date={2008},
   number={1},
   pages={1--9},
}
\bib{GP}{article}{
   author={Gross, Mark},
   author={Popescu, Sorin},
   title={Equations of $(1,d)$-polarized abelian surfaces},
   journal={Math. Ann.},
   volume={310},
   date={1998},
   number={2},
   pages={333--377},
}
\bib{HNW}{article}{
   author={Huybrechts, Daniel},
   author={Nieper-Wisskirchen, Marc},
   title={Remarks on derived equivalences of Ricci-flat manifolds},
   journal={Math. Z.},
   volume={267},
   date={2011},
   number={3-4},
   pages={939--963},
}
\bib{Kollar}{article}{
   author={Koll{\'a}r, J{\'a}nos},
   title={Flops},
   journal={Nagoya Math. J.},
   volume={113},
   date={1989},
   pages={15--36},
}
\bib{OS}{article}{
   author={Oguiso, Keiji},
   author={Sakurai, Jun},
   title={Calabi-Yau threefolds of quotient type},
   journal={Asian J. Math.},
   volume={5},
   date={2001},
   number={1},
   pages={43--77},
}
\bib{d-equivalence}{article}{
	author={Popa, Mihnea},
	author={Schnell, Christian},
	title={Derived invariance of the number of holomorphic 1-forms and vector fields},
	journal={Ann. Sci. \'Ec. Sup\'er. (4)},
	volume={44},
	number={3},
	date={2011},	
}
\bib{Sawon}{article}{
   author={Sawon, Justin},
   title={Twisted Fourier-Mukai transforms for holomorphic symplectic four-folds},
   journal={Adv. Math.},
   volume={218},
   date={2008},
   number={3},
   pages={828--864},
}
\end{biblist}

\end{document}